\documentclass[a4paper,10pt]{amsart}

\usepackage{graphicx}
\usepackage{dirtytalk}
\usepackage{amsthm}
\usepackage{amssymb}
\usepackage{amsmath}
\usepackage{accents}
\usepackage{hyperref}
\usepackage{xcolor}
\usepackage{enumerate}
\usepackage{listings}
\usepackage{complexity}
\usepackage{blkarray}
\usepackage{braket}
\usepackage{lmodern}
\usepackage[english]{babel}
\usepackage{enumitem}

\usepackage[font=small,labelfont=bf]{caption}
\usepackage[font=small,labelfont=normalfont,labelformat=simple]{subcaption}
\widowpenalty10000
\clubpenalty10000

\graphicspath{{figs/}}

% custom environments
\newtheorem{theorem}{Theorem}[section]

\newtheorem{construction}[theorem]{Construction}
\newtheorem{lemma}[theorem]{Lemma}
\newtheorem{claim}[theorem]{Claim}

\newtheorem{problem}[theorem]{Problem}

\author
{
Raphael Steiner 
}
\thanks{Institute for Operations Research, Department of Mathematics, ETH Z\"{u}rich, Switzerland,  \texttt{raphaelmario.steiner@math.ethz.ch}. The research of the author is funded by the Ambizione Grant No. 216071 of the Swiss National Science Foundation.}

\date{\today}

\title{Fractional chromatic number vs. Hall ratio}

%------------------------------------------------------------------------------
\begin{document}
\maketitle

\begin{abstract}
Given a graph $G$, its \emph{Hall ratio} $\rho(G)=\max_{H\subseteq G}\frac{|V(H)|}{\alpha(H)}$ forms a natural lower bound on its fractional chromatic number $\chi_f(G)$. A recent line of research studied the fundamental question of whether $\chi_f(G)$ can be bounded in terms of a (linear) function of $\rho(G)$. In a breakthrough-result, Dvo\v{r}\'{a}k, Ossona de Mendez and Wu~\cite[\emph{Combinatorica}, 2020]{dvorak} gave a strong negative answer by proving the existence of graphs with bounded Hall ratio and arbitrarily large fractional chromatic number. In this paper, we solve two natural follow-up problems that were raised by Dvo\v{r}\'{a}k et al.

The first problem concerns determining the growth of $g(n)$, defined as the maximum ratio $\frac{\chi_f(G)}{\rho(G)}$ among all $n$-vertex graphs. Dvo\v{r}\'{a}k et al. obtained the bounds $\Omega(\log\log n) \le g(n)\le O(\log n)$, leaving an exponential gap between the lower and upper bound. We almost fully resolve this problem by proving that the truth is close to the upper bound, i.e., $g(n)=(\log n)^{1-o(1)}$. 

The second problem posed by Dvo\v{r}\'{a}k et al. asks for the existence of graphs with bounded Hall ratio, arbitrarily large fractional chromatic number and such that every subgraph contains an independent set that touches a constant fraction of its edges. We affirmatively solve this second problem by showing that such graphs indeed exist.
\end{abstract}

\section{Introduction}
The famous chromatic number $\chi(G)$ of a graph $G$ is among the most fundamental graph parameters, whose importance has motivated the study of several variants as well as lower and upper bounds. One of the most important such variants is the well-known \emph{fractional chromatic number} $\chi_f(G)$ of a graph $G$, which is defined as the optimal value of the LP-relaxation of the natural integer program for the chromatic number. Concretely, $\chi_f(G)$ is defined as the optimal value of the following linear program ($\mathcal{I}(G)$ denotes the collection of independent sets):

\begin{align*}
\tag{P}
    \text{min} \sum_{I \in \mathcal{I}(G)}&{x_I} \\
    \text{s.t.} \sum_{I\in \mathcal{I}(G): v \in I}&{x_I}\ge 1~~(\forall v \in V(G)), \\
    & x_I \ge 0~~(\forall I \in \mathcal{I}(G)).
\end{align*}

There are many other (equivalent) ways of defining $\chi_f(G)$, see e.g.~\cite{scheinerman2011fractional}. One of them is obtained by considering the dual linear program of the above primal program (P), which represents $\chi_f(G)$ as the optimal value of 

\begin{align*}
\tag{D}
    \text{max} &\sum_{v\in V(G)}{w_v} \\
    \text{s.t.}~~~~&\sum_{v \in I}{w_v}\le 1~~(\forall I \in \mathcal{I}(G)), \\
    &~~~~~w_v \ge 0~~(\forall v\in V(G)).
\end{align*}

Given a non-negative vertex-weight function $w:V(G)\rightarrow [0,\infty)$, let us denote by $\alpha_w(G)$ the maximum total weight of an independent set in $G$, that is,
$$\alpha_w(G):=\max_{I \in \mathcal{I(G)}}\sum_{v\in I}w(v).$$

From the new LP-representation (D) of $\chi_f(G)$, one deduces rather easily the following equivalent representation of $\chi_f(G)$:

\begin{equation}
\tag{M} \chi_f(G)=\max_{w:V(G)\rightarrow [0,\infty), w \not\equiv 0}\frac{\sum_{v\in V(G)}{w(v)}}{\alpha_w(G)}.\end{equation}

In other words, $\chi_f(G)$ equals the maximum ratio between the total vertex weight and the largest weight of an independent set over all non-negative weight assignments. If in the above maximum we only consider $\{0,1\}$-weightings of the vertices, then we obtain a natural lower bound on $\chi_f(G)$ that is known as the \emph{Hall ratio} $\rho(G)$, and can be equivalently rewritten as 
$$\rho(G)=\max_{H\subseteq G}\frac{|V(H)|}{\alpha(H)}.$$

So every graph $G$ satisfies $\rho(G)\le \chi_f(G)\le \chi(G)$, and a fundamental question is to understand how tight the above bounds on the fractional chromatic number are. Concerning the upper bound, it is known since a long time that the inequality can be arbitrarily far from tight. In fact, the well-known \emph{Kneser graphs} are examples of graphs for which the fractional chromatic number can be bounded by a constant, and yet the chromatic number can attain arbitrarily large values at the same time, see~\cite{baranyi,lovasz,scheinerman2011fractional}. In contrast, on the lower bound side the relationship between $\rho(G)$ and $\chi_f(G)$ has remained much less understood until recently, and for many examples of graphs (such as the Kneser graphs and binomial random graphs), the Hall ratio $\rho(G)$ does provide a tight (or almost tight) lower bound for $\chi_f(G)$.

This has motivated several researchers to suspect an inverse relationship between $\chi_f(G)$ and $\rho(G)$. For example, Harris~\cite{harris} conjectured in 2016 that there exists an absolute constant $C>0$ such that every graph $G$ satisfies $\chi_f(G)\le C\rho(G)$, and the same problem was implicitly already posed earlier by Johnson Jr.~\cite{johnson2009} in 2009. Notably, at the time this conjecture was posed, it was not even known whether the fractional chromatic number can be bounded by \emph{any} function of the Hall ratio.

In terms of negative results that separate $\chi_f(G)$ and $\rho(G)$, a first step was taken by Johnson Jr.~\cite{johnson2009} who proved that there exist graphs $G$ for which $\chi_f(G)>\rho(G)$. Strengthening this result, Daneshgar et al.~\cite{daneshgar} and Barnett~\cite{barnett} constructed graphs $G$ such that $\frac{\chi_f(G)}{\rho(G)}\ge \frac{6}{5}$ and $\frac{\chi_f(G)}{\rho(G)}\ge \frac{343}{282}$, respectively. 

Note that the Hall ratio of a graph forms an upper bound on its clique number. Thus, a natural place to look for examples of graphs with small Hall ratio and comparably large fractional chromatic number are the classic constructions of triangle-free graphs with large chromatic number. Interestingly, for one of the most famous such constructions, the iterated Mycielskians, Cropper et al.~\cite{cropper} showed that also their Hall ratios grow to infinity. However, even for this explicit construction, it remains open whether the ratio $\frac{\chi_f(G)}{\rho(G)}$ is bounded.

In a recent breakthrough, Blumenthal et al.~\cite{blumenthal} disproved the conjecture of Harris by constructing graphs $G$ for which the ratio $\frac{\chi_f(G)}{\rho(G)}$ becomes arbitrarily large. Independently, Dvo\v{r}\'{a}k et al.~\cite{dvorak} obtained the even stronger result that there exist graphs $G$ for which $\rho(G)$ is bounded by a constant while $\chi_f(G)$ is arbitrarily large, thus showing that $\chi_f(G)$ cannot be upper bounded by a function of $\rho(G)$. 

\begin{theorem}[Dvo\v{r}\'{a}k, Ossona de Mendez, Wu~\cite{dvorak}]\label{thm:dvorak}
For every constant $C>0$ there exists a graph $G$ with $\chi_f(G)\ge C$ and $\rho(G)\le 18$.  
\end{theorem}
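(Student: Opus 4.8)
The plan is to construct, for each target constant $C$, a graph with large fractional chromatic number whose Hall ratio stays bounded. The natural starting point is a graph that already has large fractional chromatic number and bounded clique number — for instance a triangle-free graph with $\chi_f(G) \ge C$, which exists by classical constructions (shift graphs, Kneser graphs, or Mycielskians). The problem is that such graphs typically have Hall ratio growing with $\chi_f$: bounding $\rho(G)$ means that \emph{every} subgraph $H$ — not just $G$ itself — must contain a large independent set relative to $|V(H)|$. So the key obstruction is that an adversary gets to pick the worst subgraph, and a generic high-$\chi_f$ graph will have some dense subgraph with no large independent set.

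**The main idea** I would pursue is a blow-up / substitution construction combined with taking a suitable subgraph or product. Start with a base graph $G_0$ of bounded degree (or bounded ``local sparsity'') with $\chi_f(G_0)$ as large as we like — such graphs exist among $d$-regular graphs by the Bollob\'as-type result that random regular graphs have $\chi_f \sim d/(2\ln d)$, so bounded degree is incompatible with bounded $\chi_f$. However bounded degree alone does not bound the Hall ratio cheaply enough (a $d$-regular graph can have Hall ratio up to roughly $d+1$ on a clique-like piece, but triangle-free $d$-regular graphs have independence ratio bounded below by $\sim \ln d / d$, giving Hall ratio only $O(d/\ln d)$, still unbounded). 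The trick must therefore be to iterate: replace each vertex of a high-girth, high-$\chi_f$ graph by an independent set and distribute edges via a bipartite ``gadget'' so that in every subgraph the independent sets inside the blown-up parts can be harvested. Concretely I would take an appropriate \emph{fractional} version: build $G$ so that $V(G)$ carries a fractional covering by independent sets of small weight, certifying $\rho(G) \le 18$ via the subgraph characterization, while the global structure (obtained from a Kneser-type or shift-graph-type skeleton) forces $\chi_f(G) \ge C$ because no single independent set covers more than a $1/C$ fraction of the weight under the uniform weighting.

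**The hard part** will be simultaneously controlling both parameters: showing $\chi_f(G) \ge C$ is routine once the skeleton is chosen (it reduces to the max-weight formulation (M) with the uniform weighting, i.e.\ lower-bounding $|V(G)|/\alpha(G)$ or using a known large-$\chi_f$ factor), but proving $\rho(G) \le 18$ requires a uniform lower bound on $\alpha(H)/|V(H)|$ over \emph{all} subgraphs $H \subseteq G$. The technique I expect is a clever averaging or random-greedy argument: pick a random independent set in $H$ by first randomly selecting, within each blown-up part, which original vertex to ``activate,'' then greedily resolving conflicts along the sparse skeleton; high girth of the skeleton (arbitrarily large, so locally tree-like) guarantees the greedy process loses only a constant factor, pinning the independence ratio of every subgraph at some absolute constant like $1/18$. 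So the crux is the choice of gadget and the analysis of this local process — ensuring the girth/sparsity of the skeleton translates into a \emph{subgraph-hereditary} independence-ratio bound, which is exactly what distinguishes this from the easier task of bounding $\alpha(G)/|V(G)|$ for $G$ alone.

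**In summary**, the proof plan is: (1) fix a skeleton graph with arbitrarily large girth and fractional chromatic number at least some function of $C$; (2) blow up each vertex into an independent set and insert a fixed bipartite gadget on each skeleton edge, tuned so that locally (within bounded radius) the structure looks like a tree of gadgets; (3) lower-bound $\chi_f(G)$ via the uniform weighting in (M), using that the skeleton's large $\chi_f$ survives the blow-up; (4) upper-bound $\rho(G)$ by exhibiting, in every subgraph $H$, an independent set of size $\ge |V(H)|/18$ through a local random/greedy selection whose analysis exploits the large girth of the skeleton to bound the conflict loss by an absolute constant.
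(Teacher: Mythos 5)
This paper does not itself prove Theorem~\ref{thm:dvorak} --- it is cited from Dvo\v{r}\'{a}k, Ossona de Mendez and Wu~\cite{dvorak}. What the paper does prove is the stronger Theorem~\ref{thm:main2}, which subsumes it, via Construction~\ref{con:randomgraph}: a random graph on pairwise disjoint parts $B_1,\dots,B_k$ of rapidly decreasing sizes (each $B_i$ an independent set), with an edge between $u\in B_i$ and $v\in B_j$ ($i>j$) placed independently with probability $1/|B_j|$. That unbalanced random multipartite construction --- and, for that matter, the original Dvo\v{r}\'{a}k et al.\ recursive layered construction --- is genuinely different from what you propose. In all of these, the mechanism is a hierarchy of layers whose sizes are tuned so that each new layer forces a fixed increment in $\chi_f$ while still being so large relative to everything below it that no small dense subgraph can form; there is no ``high-girth skeleton'' and no edge gadget.

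Your own plan is not yet a proof and contains a gap at exactly the point you flag as ``the crux.'' You never specify the bipartite gadget, and you never justify the assertion that the skeleton's large girth forces the local random/greedy process to deliver, in \emph{every} subgraph $H$, an independent set of size $\ge |V(H)|/18$. Worse, there is an unresolved internal tension: you want the gadgets to make the graph ``locally... look like a tree of gadgets,'' but you simultaneously claim ``the skeleton's large $\chi_f$ survives the blow-up.'' Bipartite gadgets inserted on edges typically push $\chi_f$ down --- a single subdivision of every edge already makes the graph bipartite --- so some specific structural choice must carry the large $\chi_f$ through the construction, and that choice is precisely what is missing. As written, step (3) of your plan (``the skeleton's large $\chi_f$ survives the blow-up'') is asserted, not argued, and step (4) (the girth-based $1/18$ bound on the independence ratio of every subgraph) is the whole difficulty of the theorem and is not addressed beyond naming it.
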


While this result resolved the question about a direct qualitative relationship between $\chi_f(G)$ and $\rho(G)$, several interesting related problems remained open. Maybe the most natural next step to take after the result of Dvo\v{r}\'{a}k et al.~stated in Theorem~\ref{thm:dvorak} is to understand how quickly the ratio between $\chi_f(G)$ and $\rho(G)$ can grow as a function of the number of vertices of $G$. This is the first of three problems posed explicitly by Dvo\v{r}\'{a}k et al.~\cite{dvorak}.
\begin{problem}[Problem~4 in~\cite{dvorak}]\label{prob:4}
Determine the function $g:\mathbb{N}\rightarrow \mathbb{R}_+$, where 
$g(n)$ is defined as the maximum of the ratio $\frac{\chi_f(G)}{\rho(G)}$ over all $n$-vertex graphs $G$.
\end{problem}

Dvo\v{r}\'{a}k et al.~\cite{dvorak} observed that a simple greedy argument can be used to show that $g(n)\le O(\log n)$, and that the construction in their proof of Theorem~\ref{thm:dvorak} yields $g(n)\ge \Omega(\log \log n)$. This leaves an exponential gap between the lower and upper bounds on $g(n)$. As the first main contribution of this paper, we close this gap and almost fully resolve Problem~\ref{prob:4} by showing that $g(n)=(\log n)^{1-o(1)}$. 

\begin{theorem}\label{thm:main}
    There exists an absolute constant $C>0$ such that for every sufficiently large integer $n$ there exists an $n$-vertex graph $G$ with $\chi_f(G)\ge \frac{\log n}{C\log \log n}$ and $\rho(G)\le C(\log\log n)^2$. Hence, $g(n)=\Omega\left(\frac{\log n}{(\log \log n)^3}\right)$ and thus $g(n)=(\log n)^{1-o(1)}$. 
\end{theorem}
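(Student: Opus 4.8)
The plan is to build the required graphs by a recursive construction and to track three quantities along the way: the fractional chromatic number, the Hall ratio, and the number of vertices. The starting point is the observation that the separation we need cannot come from any ``symmetric'' construction: for vertex-transitive $G$ (and more generally whenever $V(G)$ partitions into few independent sets) one has $\chi_f(G)=\Theta(\rho(G))$, so the gap must be produced by a genuinely hierarchical, asymmetric object. Theorem~\ref{thm:dvorak} already furnishes such objects, but with a doubly-exponential number of vertices, so the strategy is to isolate the amplification mechanism behind a construction of that type and make it \emph{vertex-efficient}, paying for this by allowing the Hall ratio to grow (slowly) rather than stay bounded. Concretely I would construct, for parameters a \emph{depth} $L$ and a \emph{branching} $B$, a graph $G=G(L,B)$ on $n\le B^{L}$ vertices — say indexed by the leaves, or by all nodes, of a rooted $B$-ary tree of depth $L$, with adjacency dictated by a shift-type compatibility rule on the labels along root-to-leaf paths, applied at the node where two paths first diverge — and prove that $\chi_f(G(L,B))=\Omega(L)$ while $\rho(G(L,B))=O((\log L)^{2})$.

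For the lower bound on $\chi_f$ I would use the weighted formula for the fractional chromatic number, i.e.\ the maximum of $\sum_v w(v)/\alpha_w(G)$ over nonzero nonnegative weightings $w$: it suffices to exhibit one weighting witnessing $\chi_f(G)\ge\Omega(L)$. Here the tree structure is exactly what one wants — assign to a vertex a weight depending geometrically on its depth (equivalently, lift a near-optimal weighting through one level of the recursion while rescaling the weights on the $B$ copies), so that the total weight is of order $L$ times the contribution of a single scale, and then argue that the shift-type edge rule forces every independent set of $G$ to be ``essentially supported at one scale'', which gives $\alpha_w(G)=O(\sum_v w(v)/L)$ and hence $\chi_f(G)=\Omega(L)$. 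As an alternative and perhaps cleaner route, one may take the base case directly from Theorem~\ref{thm:dvorak} — a graph with $\chi_f$ equal to a large absolute constant and $\rho\le 18$ — and phrase the recursion as an amplification operation $G\mapsto\Phi(G)$ with $\chi_f(\Phi(G))\ge(1+\Omega(1))\chi_f(G)$ and $|V(\Phi(G))|\le|V(G)|^{O(1)}$, iterated $k=\Theta(\log\log n)$ times.

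The crux, and the step I expect to be the main obstacle, is the upper bound on $\rho$: one must show that \emph{every} subgraph $H\subseteq G$ contains an independent set of density $\Omega(1/(\log L)^{2})$ — equivalently, that the amplification $\Phi$ increases the Hall ratio only \emph{additively}, by $O(\log L)=O(\log\log n)$ per step, rather than multiplicatively as a lexicographic product would. I would prove this by induction on the depth: split $V(H)$ into the $B$ subtrees hanging off the root; if some subtree carries a constant fraction of $V(H)$, recurse; otherwise $V(H)$ is spread across many subtrees and the compatibility rule should make a suitable transversal selection across subtrees independent, costing one logarithmic factor to balance subtree sizes and a second one in the recursion over the $L$ levels. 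The delicate point is that the edge rule must be simultaneously dense enough to defeat cheap fractional colourings (needed for $\chi_f=\Omega(L)$) and sparse enough that this greedy/transversal extraction loses only a $\mathrm{polylog}(L)$ factor (needed for $\rho=O((\log L)^2)$); reconciling these two demands is the heart of the matter, and is presumably also the source of the $1/\log\log n$ loss in the fractional chromatic number and of the square in $(\log\log n)^2$. Granting the construction with $\chi_f(G)=\Omega(L)$, $\rho(G)=O((\log L)^2)$ and $n\le B^{L}$, it remains to optimise the trade-off: taking $B=L^{\Theta(1)}$ gives $\log n=\Theta(L\log L)$, hence $L=\Theta(\log n/\log\log n)$, $\chi_f(G)=\Omega(\log n/\log\log n)$ and $\rho(G)=O((\log\log n)^2)$, which is the asserted bound; dividing, $g(n)\ge\chi_f(G)/\rho(G)=\Omega(\log n/(\log\log n)^3)=(\log n)^{1-o(1)}$, and since $g(n)\le O(\log n)$ is already known this pins down $g(n)$ up to a factor $(\log\log n)^{O(1)}$.
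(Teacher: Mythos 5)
Your parameter accounting at the end is correct, but the proposal as written does not prove the theorem: the construction is never pinned down (the ``shift-type compatibility rule'' is left undefined), and both of the estimates you need are asserted rather than proved. The paper takes a quite different route, using a \emph{random} multipartite graph (Construction~\ref{con:randomgraph}): $k = \tfrac12 \log_3 n$ independent parts $B_1, \ldots, B_k$ with $|B_i| = n/3^i$, and cross-edges between $B_i$ and $B_j$ ($i>j$) appearing independently with probability $1/|B_j|$. Your depth parameter $L$ is this $k$, and your geometric-depth weighting is exactly $w(v) = 1/|B_i|$ on $B_i$; but the step you wave through --- that the edge rule forces every independent set to be ``essentially supported at one scale'' --- is the entire content of Lemma~\ref{lem:fracchrom}, proved by a union bound over profiles $(t_1, \ldots, t_k)$ of a candidate independent set that crucially exploits the independence of the random edges. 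A deterministic tree construction gives you nothing to replace that union bound with, and you offer no alternative argument.

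The Hall-ratio bound is the larger gap, as you acknowledge yourself. The paper's Lemma~\ref{lem:smallish} does not argue by induction on subtrees: it first proves that with probability $>1/3$ the random graph has a uniform sparsity property (no dyadic window of levels $\{i-2^{j+1}+1, \ldots, i-2^j\}$ contains a subgraph on at most $s$ vertices with at least $3s$ edges, for all $s$ in the appropriate scale range), and then, for an arbitrary $H \subseteq G$, locates the scale $i$ with $2|B_{i+1}| < |V(H)| \le 2|B_i|$, discards the negligible tail $\bigcup_{\ell>i} B_\ell$, and splits what remains into $O(\log k)$ dyadic windows; sparsity plus the bound $\alpha(F) \ge |V(F)|^2/(2|E(F)|+|V(F)|)$ give each window an independent set of size $\Omega(|X_j|^2/|V(H)|)$, and the heaviest window carries an $\Omega(1/\log k)$ fraction of $V(H)$, yielding $\alpha(H) = \Omega(|V(H)|/(\log k)^2)$. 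Your ``recurse into a heavy subtree or take a transversal'' sketch has no analogue of the sparsity lemma and no analogue of the dyadic decomposition of scales, and without some structural input of that kind the transversal step has no leverage --- the ``delicate point'' you flag is not a detail to be filled in but is precisely where the theorem lives. To repair this you would need either to specify a concrete edge rule and prove both estimates for it, or to switch to a random construction, where independence of the edges does most of the work.
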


Another natural problem that is discussed by Dvo\v{r}\'{a}k et al.~\cite{dvorak} is motivated as follows: Theorem~\ref{thm:dvorak} shows that restricting the weight functions in the formula (M) for $\chi_f(G)$ to $\{0,1\}$-functions is in general too restrictive to expect a good approximation of the maximum over all weight-functions. However, one may hope that there exist other natural classes of weight functions that \emph{do} allow for good approximations of the maximum in (M). As a first candidate, Dvo\v{r}\'{a}k et al. proposed to study the class of weight function that are proportional to the degrees of a subgraph. Concretely, given a subgraph $H$ of a graph $G$ let us define a weight function $\mathrm{deg}_H:V(G)\rightarrow \mathbb{N}_0$ by defining $\mathrm{deg}_H(v)$ as the degree of $v$ in $H$ for every $v\in V(H)$ and as $0$ for every $v\in V(G)\setminus V(H)$. Their motivation for studying this class of weight functions came from the insight that at least for the examples they constructed to prove Theorem~\ref{thm:dvorak}, these weight functions would provide good lower bounds for the fractional chromatic number. 

Explicitly, they posed the following open problem.

\begin{problem}[Problem~5 in~\cite{dvorak}]\label{prob:5}
Do there for some constant $c>0$ exist graphs $G$ of arbitrarily large fractional chromatic number such that $\rho(G)\le c$ and $\alpha_{\mathrm{deg}_H}(H)\ge |E(H)|/c$ for every $H\subseteq G$?
\end{problem}

Note that for every subgraph $H$ of a graph $G$, we have that the total weight of $\mathrm{deg}_H$ equals $\sum_{v\in V(G)}\mathrm{deg}_H(v)=2|E(H)|$, which explains the phrasing of the previous problem.

As the second main result of this paper, we affirmatively solve Problem~\ref{prob:5} by proving the following result. 

\begin{theorem}\label{thm:main2}
For every fixed $\delta >0$ and every sufficiently large integer $n$ there exists an $n$-vertex graph $G$ with $\chi_f(G)\ge \frac{\log \log n}{50 \log\log\log n}$, $\rho(G)\le 4+\delta$ and $\alpha_{\mathrm{deg}_H}(G)\ge \frac{|E(H)|}{4+\delta}$ for every subgraph $H\subseteq G$. 
\end{theorem}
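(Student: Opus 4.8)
The plan is to build $G$ by an iterated amplification, extending the recursive construction behind Theorem~\ref{thm:dvorak}. I would produce a sequence $G_0,G_1,\dots,G_T$ of graphs, with $G_0$ a single edge, where $G_{i+1}$ arises from $G_i$ by an amplification step $\Phi$ tuned so that: (a) $\chi_f(G_{i+1})\ge \chi_f(G_i)+1$; (b) $\rho(G_{i+1})\le 4+\delta$, so the Hall ratio stays uniformly bounded along the whole iteration; (c) a strengthened structural invariant is preserved, guaranteeing that degree weightings $\mathrm{deg}_H$ of subgraphs never witness a fractional chromatic number exceeding $8+2\delta$; and (d) the vertex count grows in a controlled way per step --- the auxiliary gadget inside $\Phi$ must be taken polynomial in $|V(G_i)|$ with exponent growing polynomially in $i$ --- so that one has $\log\log|V(G_T)|=\Theta(T\log T)$. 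Given $n$, choosing $T$ maximal with $|V(G_T)|\le n$ and adding $n-|V(G_T)|$ isolated vertices (which affects none of the three quantities) yields an $n$-vertex graph $G$ with $T=\Theta\!\big(\tfrac{\log\log n}{\log\log\log n}\big)$, so by (a) $\chi_f(G)\ge T\ge \tfrac{\log\log n}{50\log\log\log n}$ for $n$ large.

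A natural candidate for $\Phi$, extending the ideas behind Theorem~\ref{thm:dvorak}, combines a lexicographic blow-up of $G_i$ by independent sets of a large size $m$ --- which changes neither $\chi_f$ nor $\rho$ but creates room --- with the addition of a sparse bipartite \emph{connector} between the blow-up classes: a suitably random (or explicit expanding, high-girth) bipartite graph of carefully tuned density, chosen so that the colour sets used on distinct classes are forced to spread out, pushing $\chi_f$ up. To certify $\chi_f(G_{i+1})\ge \chi_f(G_i)+1$ I would exhibit an explicit fractional clique via $(\mathrm M)$: take the weighting witnessing $\chi_f(G_i)$, transport it to the blow-up, and superimpose a small uniform weight on the connector vertices; the heart of the matter is then the estimate that every independent set of $G_{i+1}$ still has total weight at most $1/(\chi_f(G_i)+1)$, using expansion of the connector to force any independent set that is heavy for the old weighting to avoid almost all of the new vertices.

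The main obstacle is (b): proving that \emph{every} subgraph $H\subseteq G_{i+1}$ has an independent set of size at least $|V(H)|/(4+\delta)$. The dangerous subgraphs are those meeting many blow-up classes via many connector edges, and I would treat these by a two-level argument --- first apply the bounded Hall ratio of $G_i$ to the ``pattern graph'' that records which classes $H$ meets substantially, passing to a large independent pattern; then, restricted to that pattern, exploit the sparsity/pseudorandomness of the connector (a greedy-deletion or second-moment argument) to extract a large independent set among the corresponding vertices. Driving the resulting constant all the way down to $4+\delta$, rather than to the unspecified absolute constant of Theorem~\ref{thm:dvorak}, is exactly what forces $m$ and the connector parameters to be extreme, hence what makes $|V(G_{i+1})|$ grow so quickly in $i$; this is why the construction reaches only $\chi_f=\Theta\!\big(\tfrac{\log\log n}{\log\log\log n}\big)$ rather than $\Theta(\log\log n)$.

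It remains to prove $\alpha_{\mathrm{deg}_H}(G)\ge |E(H)|/(4+\delta)$ for every $H\subseteq G$; this is stronger than the formulation in Problem~\ref{prob:5}, since writing $\overline H:=G[V(H)]$ one has $\alpha_{\mathrm{deg}_H}(G)=\alpha_{\mathrm{deg}_H}(\overline H)\le\alpha_{\mathrm{deg}_H}(H)$. Applying $(\mathrm M)$ to $\overline H$ with the weight function $\mathrm{deg}_H$, for which $\sum_v\mathrm{deg}_H(v)=2|E(H)|$, gives $\alpha_{\mathrm{deg}_H}(G)\ge 2|E(H)|/\chi_f(\overline H)$, so the bound is automatic once $\chi_f(\overline H)\le 8+2\delta$; only the case where $\overline H$ is a subgraph of $G$ of large fractional chromatic number needs work. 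There I would invoke invariant (c) through a structural case analysis of $H$ inside $\overline H$: on near-regular parts of $H$ one uses $\rho(G[\cdot])\le\rho(G)\le 4+\delta$ to produce an independent set of $G$ whose $\mathrm{deg}_H$-weight is a $\tfrac{1-o(1)}{4+\delta}$-fraction of the weight that part carries, and on atypical (e.g.\ star-like) parts one finds the heavy independent set directly, using that bounded Hall ratio forbids large cliques, so that neighbourhoods of high-degree vertices already contain large independent sets. The real difficulty is that this analysis, like the Hall-ratio bound, has to be pushed through the recursion with all constants held at exactly $4+\delta$, simultaneously with (a)--(d); I expect that to be where essentially all of the work lies.
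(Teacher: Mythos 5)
Your proposal takes an entirely different route from the paper's, and unfortunately it stops well short of a proof. The paper does not iterate an amplification step at all: it works with a single random graph $G_{\mathcal{B}}$ from Construction~\ref{con:randomgraph}, with parts of sizes $|B_i|=n^{1-4^i\varepsilon}$ ($\varepsilon=1/\sqrt{\log n}$, $k=\frac13\log_4\log n$), and proves the three desired properties directly. Lemma~\ref{lem:fracchrom} gives $\chi_f\ge k/(10\log k)$ with high probability via a union bound over possible supports of heavy independent sets; and Claim~\ref{cl2} shows w.h.p.\ that (i) any small subgraph confined to the ``early'' parts $B_1,\dots,B_{i-1}$ has average degree $<3$, hence is $2$-degenerate and $3$-chromatic, and (ii) the tail $\bigcup_{\ell>i}B_\ell$ carries negligibly many edges. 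Every subgraph $H$ then decomposes as a $4$-chromatic part (the $3$-chromatic early part plus the independent set $B_i$) together with a tail contributing an $O(1/k)$-fraction of vertices and edges, yielding simultaneously $\alpha(H)\ge|V(H)|/(4+\delta)$ and an independent colour class touching $\ge|E(H)|/(4+\delta)$ edges. This ``one-shot'' sparsity argument is precisely what lets all three bounds be proved at once with the constant driven to $4+\delta$.

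Against that, your proposal leaves the hard parts unresolved in ways I think are genuinely problematic, not just unwritten. First, no amplification gadget $\Phi$ with the claimed properties is exhibited; you describe a lexicographic blow-up plus a sparse bipartite ``connector'' and say expansion should increase $\chi_f$ by $1$, but this is not proved, and it is far from obvious that a sparse bipartite supergraph of a blow-up can raise the fractional chromatic number at all without also raising the Hall ratio. Second, and most importantly, you give no argument that $\rho(G_{i+1})\le 4+\delta$ is preserved; you merely describe a ``two-level'' strategy and concede ``I expect that to be where essentially all of the work lies.'' The known recursive constructions (e.g.\ Dvo\v{r}\'{a}k et al., yielding Hall ratio $18$) do \emph{not} obviously push down to $4+\delta$, and no mechanism is offered for why yours would. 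Third, the treatment of $\alpha_{\mathrm{deg}_H}(G)\ge|E(H)|/(4+\delta)$ is only carried out when $\chi_f(G[V(H)])\le 8+2\delta$ via LP duality; for subgraphs of large fractional chromatic number --- which exist, since $\chi_f(G)$ itself is large --- you appeal to an undefined ``invariant (c)'' and a ``structural case analysis'' that is not spelled out. None of the paper's core ideas (degeneracy of the early parts, negligibility of the tail, a single probabilistic construction) appear in your sketch, and the gaps you leave are exactly the hard parts of the theorem. As it stands the proposal is an incomplete plan, not a proof, and I am not convinced the recursive route could achieve the specific constant $4+\delta$.
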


Note that in fact, the statement shown by our Theorem~\ref{thm:main2} is slightly stronger than what was asked for by Problem~\ref{prob:5}, in that $\alpha_{\mathrm{deg}_H}(H)\ge \alpha_{\mathrm{deg}_H}(G)$ for every $H\subseteq G$, and this inequality can be strict.

We remark that in order to keep the presentation as simple as possible, we did not optimize the constant $4+\delta$ in the statement of Theorem~\ref{thm:main2}. With a more careful analysis and at the price of a worse lower bound on the fractional chromatic number, one could replace the constant $4+\delta$ by $3+\delta$ (in both places).

Our proofs of both Theorem~\ref{thm:main} and Theorem~\ref{thm:main2} involve a careful analysis of the Hall ratios and subgraphs of a certain class of unbalanced random graph constructions, which generalize and modify a construction recently used by Janzer, Sudakov and the author~\cite{janzer} in a different context.

Another interesting problem about the Hall ratio was recently posed by Walczak at the BIRS Workshop on New Perspectives in Colouring and Structure (September 29--October 4 2024). The problem asks whether for every $\varepsilon>0$ there exist graphs with arbitrarily large fractional chromatic number and Hall ratio at most $2+\varepsilon$. In the problem statement, Walczak also announced the result (joint with Teiki Rigaud) that the so-called \emph{Burling graphs} have arbitrarily large fractional chromatic number and Hall ratio at most $3$, showing that one can take $\varepsilon=1$.  Improving on this, Davies, Hatzel and Yepremyan (personal communication) recently and independently rediscovered the construction used in our proof of Theorem~\ref{thm:main2}, and showed that there exist graphs with Hall ratio at most $3-\delta$ for some explicit $\delta>0$ and arbitrarily large fractional chromatic number. Hence, one can take $\varepsilon<1$ in the above problem.

\medskip

\paragraph*{\textbf{Notation and Terminology.}} Throughout this paper, we use $\log(x)$ to denote the \emph{natural} logarithm of a positive real number $x$. Given a graph $G$, we denote by $V(G)$ its vertex set and by $E(G)$ its edge set. Given a subset $X\subseteq V(G)$, we denote by $G[X]$ the induced subgraph of $G$ with vertex set $X$ and denote $G-X:=G[V(G)\setminus X]$.

\medskip

\paragraph*{\textbf{Organization.}} The rest of the paper is structured as follows. In Section~\ref{sec:frac} we introduce a general kind of random graph construction (Construction~\ref{con:randomgraph}), and prove a general lower bound on the fractional chromatic number of such graphs that holds under very mild conditions on the setup. In Section~\ref{sec:main1} and~\ref{sec:main2} we then prove two probabilistic lemmas that show that two different special cases of this general type of random graph construction exhibit the additional properties required by Theorem~\ref{thm:main} and Theorem~\ref{thm:main2}, respectively.

\section{Fractional chromatic number}\label{sec:frac}

In this section, we bound the fractional chromatic number of a class of unbalanced random graph constructions, defined as follows.

\begin{construction}\label{con:randomgraph}
    Let $\mathcal{B}=(B_i)_{i=1}^{k}$ be a collection of pairwise disjoint sets, ordered such that   $|B_1|\ge \dots\ge |B_k|$. We denote by $G_{\mathcal{B}}$ the random graph with vertex set $V(G_{\mathcal{B}}):=\bigcup_{i=1}^{k}{B_i}$ created as follows: All the sets $B_1,\ldots,B_k$ are independent in $G_{\mathcal{B}}$, and for every pair of vertices $u\in B_i, v\in B_j$ with $i>j$, the edge $uv$ is included in $G_{\mathcal{B}}$ independently with probability $1/|B_j|$.
\end{construction}

Our first lemma below shows that the fractional chromatic number of a random graph as in Construction~\ref{con:randomgraph} with $k$ parts w.h.p. is in $\Omega(k/\log k)$, provided none of the sets $B_i$ is too small\footnote{A special case of this result, for a slight variant of the construction, was already shown in~\cite[Lemma 2.3]{janzer}. However, the proof method that was used for Lemma~2.3 in~\cite{janzer} does not directly transfer to the more general statement formulated in Lemma~\ref{lem:fracchrom}, whose increased generality is needed for the applications in this paper.}. 

\begin{lemma}\label{lem:fracchrom}
Let $\mathcal{B}=(B_i)_{i=1}^{k}$ with $k\ge 3$ be a collection of disjoint sets with $|B_1|\ge \dots\ge |B_k|\ge k$. Then
$$\mathbb{P}\left[\chi_f(G_{\mathcal{B}})\le \frac{k}{10\log k}\right] \le \left(1+k^{-\log k}\right)^k-1=O(k^{1-\log(k)}).$$ In particular, we have $\chi_f(G_{\mathcal{B}})> \frac{k}{10\log k}$ with probability tending to $1$ as $k\rightarrow \infty$.
\end{lemma}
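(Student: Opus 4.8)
The plan is to lower-bound $\chi_f(G_{\mathcal B})$ by choosing a good weight function in the representation (M). The natural choice is the weight function $w$ with $w(v):=1/|B_i|$ for $v\in B_i$; it has total weight $\sum_v w(v)=\sum_{i=1}^k|B_i|\cdot|B_i|^{-1}=k$, and for any independent set $I$ we have $\sum_{v\in I}w(v)=\sum_{i=1}^k\lambda_i(I)$ with $\lambda_i(I):=|I\cap B_i|/|B_i|\in[0,1]$. Since (M) gives $\chi_f(G_{\mathcal B})\ge k/\alpha_w(G_{\mathcal B})$, the event $\chi_f(G_{\mathcal B})\le k/(10\log k)$ is contained in the event $\alpha_w(G_{\mathcal B})\ge 10\log k$, so the lemma reduces to proving that the probability that $G_{\mathcal B}$ contains an independent set $I$ with $\sum_{i=1}^k\lambda_i(I)\ge 10\log k$ is at most $(1+k^{-\log k})^k-1$. (For $k$ below an absolute constant this bound is $\ge 1$ and the claim is vacuous, so we may assume $k$ is large.)

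To prove this I would expose the randomness of $G_{\mathcal B}$ in $k$ rounds, revealing in round $i$ the edges between $B_i$ and $B_1\cup\dots\cup B_{i-1}$ (there are no edges inside a single $B_i$). The governing phenomenon is a greedy-type estimate: given an independent set $I'$ of $G_{i-1}:=G_{\mathcal B}[B_1\cup\dots\cup B_{i-1}]$ with $s:=\sum_{j<i}\lambda_j(I')$, the set $Z_i(I'):=\{v\in B_i:\,v\text{ has no edge to }I'\}$ into which $I'$ can be grown within $B_i$ has size equal to a sum of $|B_i|$ independent indicators with total mean $|B_i|\prod_{j<i}(1-|B_j|^{-1})^{|I'\cap B_j|}\le|B_i|e^{-s}$. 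The goal is to show that, outside a small-probability event in round $i$, \emph{every} independent set $I'$ of $G_{i-1}$ has $|Z_i(I')|$ not much larger than $|B_i|e^{-s}$. Granting this in every round, an independent set $I$ of $G_{\mathcal B}$ with partial sums $s_i:=\sum_{j\le i}\lambda_j(I)$ satisfies $s_i\le s_{i-1}+O(e^{-s_{i-1}})+\mathrm{error}_i/|B_i|$; solving this recursion — and using $|B_i|\ge k$ to make $\sum_i\mathrm{error}_i/|B_i|$ summable — yields $s_k=O(\log k)<10\log k$. The $k$ rounds are then assembled as follows: if the round-$i$ exceptional event $E_i$ has probability at most $k^{-\log k}$ conditionally on the edges exposed in the previous rounds, then considering the disjoint "first-failure" events (equivalently, bounding $\mathbb E\prod_i(1+\mathbf 1_{E_i})\le(1+k^{-\log k})^k$) shows that the probability that some $E_i$ occurs is at most $(1+k^{-\log k})^k-1$; off this event $\alpha_w(G_{\mathcal B})<10\log k$, hence $\chi_f(G_{\mathcal B})>k/(10\log k)$, and the "in particular" statement is immediate since $(1+k^{-\log k})^k-1=O(k^{1-\log k})\to 0$.

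The step I expect to be the crux — and the main obstacle — is the round-$i$ estimate: bounding by $k^{-\log k}$ the probability that some independent set $I'$ of $G_{i-1}$ grows into too many vertices of $B_i$. One cannot take a naive union bound over all candidate $I'$, since $G_{i-1}$ may contain $2^{\Omega(|B_1|)}$ independent sets. Instead I would union bound over the \emph{profile} $(|I'\cap B_j|)_{j<i}$ of $I'$ together with a "witness" subset $A\subseteq B_i$ of the allegedly over-large size, paying for the choice of $A$ through $\mathbb P[\,A\text{ has no edge to }I'\,]\le e^{-|A|\,s}$; discarding parts in which $I'$ occupies only a negligible fraction (their total weight is $o(1)$); and — most importantly — using that on the good event of the earlier rounds any relevant $I'$ already obeys the greedy recursion, so that its profile is heavily constrained (each coordinate bounded in terms of the earlier ones, and $s=O(\log k)$), keeping both the number of admissible profiles and the binomial factors $\prod_{j<i}\binom{|B_j|}{|I'\cap B_j|}$ under control. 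The hypothesis $|B_i|\ge k$ re-enters here: since $s=O(\log k)$ along admissible profiles, a witness set $A$ of size $\approx|B_i|e^{-s}$ has $|A|\ge|B_i|e^{-O(\log k)}\ge 1$, so overshooting its expectation is a genuine large deviation that, once the profile/witness accounting is carried out, supplies the factor $k^{-\log k}$. Making all of this quantitative, with explicit constants in the recursion, is the technical heart of the argument.
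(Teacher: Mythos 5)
Your opening reduction matches the paper's: take $w(v)=1/|B_i|$ on $B_i$, note $\sum_v w(v)=k$, and reduce to bounding $\mathbb{P}[\alpha_w(G_{\mathcal B})\ge 10\log k]$. After that point you diverge to a genuinely different, sequential route (reveal edges from $B_i$ to $B_1\cup\dots\cup B_{i-1}$ round by round, control a recursion $s_i\le s_{i-1}+O(e^{-s_{i-1}})+\text{error}$, and assemble via $\mathbb{E}\prod_i(1+\mathbf 1_{E_i})\le(1+p)^k$). This is a plausible-looking strategy, but as written it is not a proof: you yourself flag the round-$i$ estimate --- that with conditional failure probability at most $k^{-\log k}$ \emph{every} independent set $I'$ of $G_{i-1}$ satisfies $|Z_i(I')|\lesssim|B_i|e^{-s}$ --- as ``the main obstacle'' and ``the technical heart,'' and then only sketch ingredients (profiles, witness sets $A$, discarding negligible parts, constraints inherited from earlier rounds) without carrying out the accounting. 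The central difficulty you identify --- that one cannot union bound over the exponentially many independent sets $I'$ of $G_{i-1}$, only over their profiles, yet $\mathbb{P}[A\text{ has no edge to }I']\le e^{-|A|s}$ is a per-$I'$ bound --- is precisely the point where the sketch stops, and it is not clear that conditioning on earlier good events actually tames the $\prod_j\binom{|B_j|}{|I'\cap B_j|}$ factor uniformly. So there is a genuine gap, not just missing bookkeeping.

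For comparison, the paper avoids any sequential revelation and works globally. It first discards the parts where $|I\cap B_i|/|B_i|<\tfrac{\log k}{k}$, which costs at most $\log k$ of the $10\log k$ budget; this forces $t_i\ge\tfrac{\log k}{k}|B_i|$ on the surviving coordinates, so $\binom{|B_i|}{t_i}\le(ek/\log k)^{t_i}$ is controlled without any inductive information. It then union bounds directly over profiles $(t_1,\dots,t_k)\in\mathcal T$ and over placements, and the key estimate is a symmetrization of the quadratic form
\[
\sum_{\substack{i<j\\ i,j\in P}}\frac{t_it_j}{|B_j|}\ \ge\ \tfrac12\Bigl(\sum_{i\in P}\tfrac{t_i}{|B_i|}-1\Bigr)\sum_{i\in P}t_i\ \ge\ \tfrac{9\log k-1}{2}\sum_{i\in P}t_i,
\]
using the ordering $|B_1|\ge\dots\ge|B_k|$. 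Clustering the surviving profiles by their support $P\subseteq[k]$ then produces exactly the form $\prod_i(1+\cdot)-1$, and $|B_i|\ge k$ is used to make each factor at most $1+k^{-\log k}$. This is both shorter and avoids the uniformity-over-independent-sets issue that your sequential argument leaves open. If you want to pursue your route, the step you must actually prove is the conditional round-$i$ bound; until that is done, the argument is incomplete.
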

\begin{proof}
Let $\mathcal{B}=(B_i)_{i=1}^{k}$ be a collection of disjoint sets with $|B_1|\ge \dots\ge |B_k|$ and let $G:=G_{\mathcal{B}}$ denote the associated random graph as in Construction~\ref{con:randomgraph}. Let us denote by $w:V(G)\rightarrow \mathbb{R}_+$ the positive weight function defined as $w(v):=\frac{1}{|B_i|}$ for every $v\in B_i$. Then clearly $\sum_{v\in V(G)}{w(v)}=k$, and hence $\chi_f(G)\le \frac{k}{10\log k}$ implies the existence of an independent set $I\subseteq V(G)$ such that $\sum_{v \in I}{w(v)}=\sum_{i=1}^{k}\frac{|I\cap B_i|}{|B_i|}\ge 10\log k$. Let $J\subseteq I$ be defined as the union of all the sets $I\cap B_i$ such that $\frac{|I\cap B_i|}{|B_i|}\ge \frac{\log k}{k}$. We then have $\sum_{i=1}^{k}{\frac{|J\cap B_i|}{|B_i|}}\ge 10\log k-\log k=9\log k$.

Let $\mathcal{T}$ denote the following set of $k$-tuples of integers:

$$\left\{(t_i)_{i=1}^{k}\in \mathbb{N}_0^k\bigg\vert\forall i\in [k]: t_i=0 \text{ or }\frac{\log k}{k}|B_i|\le t_i\le |B_i|, \text{ and }\sum_{i=1}^{k}\frac{t_i}{|B_i|}\ge 9\log k\right\}.$$

For any $(t_1,\ldots,t_k) \in \mathcal{T}$, let us denote by $\mathcal{E}(t_1,\ldots,t_k)$ the random event that there exists an independent set in $G_{\mathcal{B}}$ containing exactly $t_i$ vertices from $B_i$ for every $i\in [k]$. Above we argued that $\chi_f(G)\le \frac{k}{10\log k}$ implies that $\mathcal{E}(t_1,\ldots,t_k)$ occurs for some  $(t_1,\ldots,t_k)\in \mathcal{T}$.
Thus, our goal will be to bound the probabilities of the individual events  $\mathcal{E}(t_1,\ldots,t_k)$ for all $(t_1,\ldots,t_k)\in \mathcal{T}$ and then apply a union bound.
So let us consider $t=(t_1,\ldots,t_k)\in \mathcal{T}$ arbitrary, and let $P=\mathrm{supp}(t):=\{i\in [k]|t_i>0\}$. We then have:

\begin{align*}\mathbb{P}[\mathcal{E}(t_1,\ldots,t_k)]\le & \prod_{i\in P}{\binom{|B_i|}{t_i}}\prod_{(i,j)\in P^2, i<j}\left(1-\frac{1}{|B_j|}\right)^{t_it_j} \\
\le & \prod_{i\in P}\left(\frac{e|B_i|}{t_i}\right)^{t_i} \prod_{(i,j)\in P^2, i<j}\exp(-t_it_j/|B_j|)\\
\le & \prod_{i\in P}\left(\frac{ek}{\log k}\right)^{t_i} \prod_{(i,j)\in P^2, i<j}\exp(-t_it_j/|B_j|) \\
= & \exp\left(\log(ek/\log k)\sum_{i\in P}t_i-\sum_{(i,j)\in P^2, i<j}\frac{t_it_j}{|B_j|}\right)
\end{align*}
Using $|B_1|\ge\dots\ge |B_k|$ and $(t_1,\ldots,t_k)\in \mathcal{T}$, we can estimate as follows:
\begin{align*}&\sum_{(i,j)\in P^2, i<j}\frac{t_it_j}{|B_j|}\ge \sum_{(i,j)\in P^2, i<j}\frac{1}{2}\left(\frac{t_it_j}{|B_i|}+\frac{t_it_j}{|B_j|}\right)\\
=& \frac{1}{2}\sum_{(i,j)\in P^2, i\neq j}\frac{t_it_j}{|B_i|}=\frac{1}{2}\left(\sum_{i\in P}\frac{t_i}{|B_i|}\sum_{i\in P}{t_i}-\sum_{i\in P}\frac{t_i^2}{|B_i|}\right)\\
\ge& \frac{1}{2}\left(\left(\sum_{i\in P}\frac{t_i}{|B_i|}\right)-1\right)\sum_{i\in P}{t_i} \ge \frac{9\log k-1}{2}\sum_{i\in P}{t_i}.
\end{align*}
Plugging into the above, we obtain that
$$\mathbb{P}[\mathcal{E}(t_1,\ldots,t_k)]\le \exp\left(-\left(\frac{9\log k-1}{2}-\log(ek/\log k)\right)\sum_{i\in P}{t_i}\right)$$ $$\le \exp\left(-2\log k\sum_{i\in P}{t_i}\right)$$
$$\le \exp\left(-2\frac{(\log k)^2}{k}\sum_{i\in P}{|B_i|}\right),$$ using $k\ge 3$ in the second to last inequality. Summing this over all $(t_1,\ldots,t_k)\in \mathcal{T}$, clustering the elements of $\mathcal{T}$ according to their support, we obtain:

\begin{align*}
&\mathbb{P}\left[\chi_f(G)\le \frac{k}{10\log k}\right] \le \sum_{t\in \mathcal{T}}\mathbb{P}[\mathcal{E}(t)]\le \sum_{\emptyset\neq P\subseteq [k]}\sum_{\substack{t\in \mathcal{T}, \\ \mathrm{supp}(t)=P}}\exp\left(-2\frac{(\log k)^2}{k}\sum_{i\in P}|B_i|\right) \\
&\le \sum_{\emptyset\neq P\subseteq [k]}\left(\prod_{i\in P}{|B_i|}\right)\cdot \exp\left(-2\frac{(\log k)^2}{k}\sum_{i\in P}|B_i|\right) \\
&=\sum_{\emptyset\neq P\subseteq [k]}\prod_{i\in P}{\exp\left(\log(|B_i|)-2\frac{(\log k)^2}{k}|B_i|\right)}\\
&=-1+\prod_{i=1}^{k}\left(1+\exp\left(\log(|B_i|)-2\frac{(\log k)^2}{k}|B_i|\right)\right).
\end{align*}
Since $|B_i|\ge k$ for all $i\in [k]$ by assumption and since the term $\log(x)-2\frac{(\log k)^2}{k}x$ is monotonically decreasing for $x\ge k$, we obtain the bound
$$\mathbb{P}\left[\chi_f(G)\le \frac{k}{10\log k}\right]\le \left(1+\exp(\log(k)-2(\log k)^2)\right)^k-1\le \left(1+k^{-\log(k)}\right)^k-1,$$
as desired. This concludes the proof of the lemma.
\end{proof}

\section{Proof of Theorem~\ref{thm:main}}\label{sec:main1}

In this section, we prove a lemma (Lemma~\ref{lem:smallish} below) about a special case of Construction~\ref{con:randomgraph} in which the part sizes $|B_i|$ are decreasing exponentially with $i$. The lemma states that with constant probability the Hall ratio of such graphs is bounded by a function growing very slowly with $n$.

In the following, we will omit floor and ceilings from asymptotic expressions when they are not essential (so as to not hinder the flow of reading). 

\begin{lemma}\label{lem:smallish}
Let $n\in \mathbb{N}$,  $k=k(n):=\frac{1}{2}\log_{3}(n)$ and let $B_1,\ldots,B_k$ be pairwise disjoint sets with $|B_i|=n/3^i$ for $i=1,\ldots,k$, where $n$ is sufficiently large. Then $$\mathbb{P}[\rho(G_{\mathcal{B}})\le 300 (\log \log n)^2]>\frac{1}{3}.$$
\end{lemma}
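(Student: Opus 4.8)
The plan is to control the Hall ratio by controlling the independence number of every induced subgraph. Since $\rho(G)=\max_{\emptyset\neq S\subseteq V(G)}\frac{|S|}{\alpha(G[S])}$, with the maximum attained on \emph{induced} subgraphs, it suffices to show that with probability $>\tfrac13$ every $S\subseteq V(G_{\mathcal B})$ satisfies $\alpha(G_{\mathcal B}[S])\ge \frac{|S|}{300(\log\log n)^2}$. Write $\ell:=\log\log n$, $T:=300\ell^{2}$ and, for $S\subseteq V(G_{\mathcal B})$, put $s_i:=|S\cap B_i|$ and $m:=|S|=\sum_i s_i$. Two cases are immediate: if $m\le T$ the bound is trivial, and if $s_i\ge m/T$ for some $i$, then the independent set $S\cap B_i$ already has the required size. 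So the whole difficulty lies in the \emph{spread-out} regime, in which $m>T$ and $s_i<m/T$ for all $i$.

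For such $S$ I would pass to a cleverly chosen down-closed sub-collection of parts. Assigning to each $v\in B_i$ the weight $w(v):=1/|B_i|$ (as in the proof of Lemma~\ref{lem:fracchrom}) and writing $w(S):=\sum_i s_i/|B_i|$, the key estimate is that $\mathbb{E}|E(G_{\mathcal B}[S])|\le \sum_{i<j}\frac{s_is_j}{|B_i|}\le w(S)\,m$, since a vertex of $S\cap B_i$ has in expectation at most $w(S)$ neighbours among the larger parts and at most $\tfrac12$ neighbours among the smaller parts (here the decay $|B_{i+1}|=|B_i|/3$ is used). Let $a:=\min\{i:s_i>0\}$; as the sizes decay geometrically, $m\le\tfrac32|B_a|$, and combining this with $s_i<m/T$ one checks that there is an integer $r=O(\log\log\log n)$ for which $P:=\{a,\dots,a+r\}$ satisfies both $\sum_{i\in P}s_i\ge m/2$ and $w(S\!\restriction_P)\le\tfrac1{10}$ (the number of parts one can afford before the weight becomes large is exactly $\log_3(T/c)=\Theta(\log\log\log n)$). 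On $S_P:=\bigcup_{i\in P}(S\cap B_i)$ the graph is then sparse — the expected number of edges is at most $|S_P|/10$ — so a Chernoff/Turán estimate shows that, off an event of small probability, $G_{\mathcal B}[S_P]$ has few enough edges that $\alpha(G_{\mathcal B}[S])\ge\alpha(G_{\mathcal B}[S_P])\ge |S_P|/5\ge m/T$.

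It remains to make the last statement hold simultaneously for all subgraphs. Here I would argue as in the proof of Lemma~\ref{lem:fracchrom}: organise the union bound by the \emph{profile} $(s_i)_i$ (equivalently, by the pair $(P,(s_i)_{i\in P})$ on which the bad event depends) rather than over subsets directly. The number of sets realising a prescribed profile is $\prod_i\binom{|B_i|}{s_i}$, while the probability that the sparse configuration $S_P$ nevertheless carries the many edges forced by Turán's bound is, after encoding such an edge-configuration and using independence of distinct potential edges, small enough to absorb the counting factor — just as $\mathbb{P}[\mathcal{E}(t_1,\dots,t_k)]$ absorbs $\prod_i\binom{|B_i|}{t_i}$ in Lemma~\ref{lem:fracchrom}. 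The two structural features that make this work are exactly those available here: the geometric relation $|B_i|=n/3^i$, so that $\prod_i\binom{|B_i|}{s_i}$ carries a compensating factor $3^{-\Omega(\sum_i i\,s_i)}$, and the hypothesis $|B_k|\ge k$ (indeed $|B_k|=\sqrt n$), which keeps the edge probabilities $1/|B_i|$ from ever being so large that the bad event becomes likely. Summing over all profiles (and the at most $k^2$ choices of $P$) then gives a total probability that is $o(1)$, hence certainly $<\tfrac23$.

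The step I expect to be the main obstacle is precisely this last union bound. A naive union bound over all $2^{|V(G_{\mathcal B})|}$ subsets $S$ is hopeless — for small and medium-sized $S$ the number of candidates dwarfs any concentration estimate one can prove — so it is essential both to replace $G_{\mathcal B}[S]$ by the sparse sub-collection $G_{\mathcal B}[S_P]$ and to organise the sum by profiles so that $\prod_i\binom{|B_i|}{s_i}$ is matched against a sufficiently small bad-event probability; getting this balance to close for \emph{every} profile, including pathological ones, is the technical heart. It is the loss incurred along the way — the factor $\tfrac12$ when passing to $S_P$, the $r=\Theta(\log\log\log n)$ parts of $P$, and the need to target $m/T$ rather than $\Omega(m)$ so that the encoding argument still balances — that forces the exponent $2$ in $(\log\log n)^2$ (and the particular constant $300$), rather than yielding a bound of $O(1)$ on $\rho(G_{\mathcal B})$.
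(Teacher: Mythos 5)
The proposal has a genuine gap at its central step, and the gap is fatal to the structure of the argument.

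You claim that in the spread-out regime ($m>T$ and $s_i<m/T$ for all $i$, with $T=300(\log\log n)^2$) there is an integer $r=O(\log\log\log n)$ such that the prefix $P=\{a,\dots,a+r\}$ satisfies \emph{both} $\sum_{i\in P}s_i\ge m/2$ and $w(S\!\restriction_P)\le\tfrac1{10}$. These two demands are incompatible in this regime: since every $s_i<m/T$, one has $\sum_{i\in P}s_i<(r+1)\,m/T$, and so $\sum_{i\in P}s_i\ge m/2$ already forces $r+1>T/2=150(\log\log n)^2$, which is \emph{much} larger than $O(\log\log\log n)$. Concretely, the profile $s_i=1$ for $i=1,\dots,T+1$ and $s_i=0$ otherwise lies in the spread-out regime (it has $m=T+1>T$ and $s_i=1<m/T$, and is realizable since $T+1<k=\tfrac12\log_3 n$ and each $|B_i|\ge 1$), yet any prefix $P$ containing at least half of $S$ must have $|P|\ge (T+1)/2$. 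So the \say{one checks that} sentence is false, and everything downstream of it — the $O(\log\log\log n)$ window, the Turán bound on $G_{\mathcal B}[S_P]$, and the claimed $\alpha(G_{\mathcal B}[S])\ge m/T$ — does not follow. (Note also that, were the claim true, your argument would yield $\alpha(G_{\mathcal B}[S])\ge |S_P|/5\ge m/10$, i.e.\ a \emph{constant} Hall ratio, which is a much stronger conclusion than the lemma's $(\log\log n)^2$ bound; this overshoot is itself a warning sign that the reduction cannot be right.)

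The missing idea is precisely what the paper supplies: rather than one short prefix window, partition the relevant parts into $O(\log k)=O(\log\log n)$ \emph{dyadic} windows $[\,i-2^{j+1}+1,\;i-2^j\,]$ together with the single block $B_i$ (where $i$ is determined by $2|B_{i+1}|<s\le 2|B_i|$). Within each dyadic window the edge probability is at most $1/|B_{i-2^j}|$ while the window has at most $\tfrac12 n\cdot 3^{-i+2^{j+1}}$ vertices, and the paper's Claim~\ref{cl1} proves directly (by a union bound over candidate dense subgraphs, not over profiles, and not via the weight $w$) that no window contains a subgraph with $\le s$ vertices and $\ge 3s$ edges. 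Pigeonholing then places at least a $1/O(\log k)$ fraction of $S$ in one window, and the Turán bound $\alpha\ge |V|^2/(2|E|+|V|)$ gives an independent set of size $\Omega(s/(\log k)^2)$ — hence $\rho(G)\le O((\log\log n)^2)$. Both the logarithmic number of windows and the squaring in Turán are where the $(\log\log n)^2$ comes from; your single-prefix scheme cannot produce them, and the union bound organised by $(i,j,s)$-triples is substantially lighter than the profile-by-profile bound you sketch (which you yourself flag as the main obstacle). I'd suggest replacing the prefix $P$ by this dyadic partition and dropping the weight-based sparsity criterion in favour of the direct first-moment bound on dense subgraphs within each window.
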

\begin{proof}
For simplicity, let us denote $G:=G_{\mathcal{B}}$ in the following. 
The following claim is at the heart of the idea of the proof. Roughly speaking it states that, with decent probability, any sequence of consecutive sets $B_i$ that is not too long does not contain relatively small subgraphs with too many edges. We then later use this sparsity result to give lower bounds on the independence number of subgraphs of~$G$. For convenience, in the following we set $B_r:=\emptyset$ for all integers $r\in \mathbb{Z}\setminus [k]$.
\begin{claim}\label{cl1} The following statement holds with probability bigger than $1/3$:

For all $(i,j,s)\in \mathbb{Z}^3$ such that $i\in [k]$, $0\le j<\log_2(i)$ and $2|B_{i+1}|<s\le 2|B_{i}|$ there are no subgraphs $H$ of $G$ with $V(H)\subseteq \bigcup_{\ell=i-2^{j+1}+1}^{i-2^j}{B_\ell}$, $|V(H)|\le s$, and $|E(H)|\ge 3s$. \end{claim}
In the rest of the proof, let us refer to the above property/random event as $\mathcal{A}$.
\begin{proof}[Proof of Claim~\ref{cl1}.]
First note that property $\mathcal{A}$ can be equivalently reformulated using the conditions $|V(H)|=s$ and $|E(H)|=3s$ instead of $|V(H)|\le s$ and $|E(H)|\ge 3s$. This is true since we can always go from a graph satisfying the latter conditions to a graph satisfying the first by removing superfluous edges and adding isolated vertices from $\bigcup_{\ell=i-2^{j+1}+1}^{i-2^j}{B_\ell}$ if necessary (this is always possible, since $\left|\bigcup_{\ell=i-2^{j+1}+1}^{i-2^j}{B_\ell}\right|\ge |B_{i-2^j}|>2|B_{i}|\ge s$). 

    For fixed integers $i\in [k]$, $0\le j<\log_2(i)$ and  $2|B_{i+1}|<s\le 2|B_i|$, let us denote by $\mathcal{E}(i,j,s)$ the event that there is a subgraph $H$ of $G$ with $V(H)\subseteq \bigcup_{\ell=i-2^{j+1}+1}^{i-2^j}{B_\ell}$, $|V(H)|=s$, and $|E(H)|=3s$. Note that $i-2^j\ge 1$ and $$\left| \bigcup_{\ell=i-2^{j+1}+1}^{i-2^j}{B_\ell}\right|\le n\sum_{\ell=i-2^{j+1}+1}^{i-2^j}{3^{-\ell}}<\frac{3}{2}n\cdot 3^{-(i-2^{j+1}+1)}=\frac{n}{2}\cdot 3^{-i+2^{j+1}}.$$
    
    We can now bound the probability of $\mathcal{E}(i,j,s)$ via a union bound over all possible placements of the subgraph $H$ inside of $\bigcup_{\ell=i-2^{j+1}+1}^{i-2^j}{B_\ell}$:
    \begin{align*}\mathbb{P}[\mathcal{E}(i,j,s)]\le & \binom{n/2\cdot 3^{-i+2^{j+1}}}{s} \binom{\binom{s}{2}}{3s}\left(\frac{1}{|B_{i-2^{j}}|}\right)^{3s} \\
    \le &\left(\frac{en/2\cdot 3^{-i+2^{j+1}}}{s}\right)^s \left(\frac{es^2/2}{3s}\right)^{3s} \left(\frac{1}{|B_{i-2^{j}}|}\right)^{3s} \\
    = &\left(\frac{e^4}{432}\frac{ns^2\cdot 3^{-i+2^{j+1}}}{|B_{i-2^j}|^3}\right)^s \\
    \le &\left(\frac{e^4}{432}\frac{4n^3\cdot 3^{-2i}\cdot 3^{-i+2^{j+1}}}{n^3\cdot 3^{-3(i-2^j)}}\right)^s\\
    =& \left(\frac{e^4}{108}3^{-2^j}\right)^s<3^{-2^js}.
\end{align*}

Note that the complement event $\overline{\mathcal{A}}$ occurs if and only if there exists some triple of integers $(i,j,s)$ with $i\in [k]$, $0\le j<\log_2(i)$ and $2|B_{i+1}|<s\le 2|B_i|$ such that the event $\mathcal{E}(i,j,s)$ occurs. Further note that since $|B_1|>\cdots>|B_k|$, the choice of $s$ and the condition $2|B_{i+1}|<s\le 2|B_i|$ uniquely determines $i$. Let us, for every $s$, denote this unique index $i$ as $i(s)$. We then find that
\begin{align*}\mathbb{P}[\overline{\mathcal{A}}]\le & \sum_{s=1}^{2|B_1|}\sum_{0\le j<\log_2(i(s))}\mathbb{P}[\mathcal{E}(i(s),j,s)]
\\
\le & \sum_{s=1}^{\infty}\sum_{j=0}^{\infty}3^{-2^js}=0.637\ldots<\frac{2}{3}.
\end{align*}
Hence, $\mathbb{P}[\mathcal{A}]>\frac{1}{3}$, as desired. This concludes the proof of the claim.
\end{proof}
In the rest of the proof, we will argue that if $G$ satisfies property $\mathcal{A}$, then $\rho(G)\le 300 (\log \log n)^2$ for $n$ large enough. Combined with Claim~\ref{cl1}, this will then directly prove that $\mathbb{P}[\rho(G)\le 300(\log\log n)^2]\ge \mathbb{P}[\mathcal{A}]>1/3$.  

So in the following, let us assume that $\mathcal{A}$ holds. We have to show that every non-empty subgraph $H$ of $G$ satisfies $\alpha(H)\ge \frac{|V(H)|}{300 (\log \log n)^2}$. 
So let $H\subseteq G$ be given arbitrarily and let $s:=|V(H)|>0$. Then $0<s\le |V(G)|<n\sum_{i=1}^{\infty}3^{-i}=\frac{n}{2}<2|B_1|$. Hence, there exists a (unique) index $i\in [k]$ such that $2|B_{i+1}|<s\le 2|B_i|$. Let $X:=V(H)\setminus \bigcup_{\ell=i+1}^{k}B_\ell$, and note that
$$|X|\ge s-\sum_{\ell=i+1}^{k}|B_\ell|> s-\frac{3}{2}|B_{i+1}|> s-\frac{3}{2}\frac{s}{2}=\frac{s}{4}.$$ For each integer $0\le j<\log_2(i)$, let us define a subset $X_j$ of $X$ as
$$X_j:=\bigcup_{\ell=i-2^{j+1}+1}^{i-2^j}(X\cap B_\ell),$$

and define $X_{-1}:=X\cap B_i$. Then the sets $X_{-1},\ldots,X_{\lceil\log_2(i)\rceil-1}$ partition $X$. 

We next claim that for every $j=-1,0,\ldots,\lceil\log_2(i)\rceil-1$, we have 
$\alpha(H[X_j])\ge \frac{|X_j|^2}{7s}.$
This is easy to see when $j=-1$, since then by definition $X_{-1}\subseteq B_i$ is an independent set in $G$, and hence we have $\alpha(H[X_{-1}])=|X_{-1}|\ge \frac{|X_{-1}|^2}{7s}$. Now, consider any integer $0\le j<\log_2(i)$. By definition of $s$ and $X_j$ we have $2|B_{i+1}|<s\le 2|B_i|$, $|X_j|\le s$ and $X_j\subseteq \bigcup_{\ell=i-2^{j+1}+1}^{i-2^j}B_\ell$. Hence, the induced subgraph $H[X_j]$ meets all preconditions of the statement $\mathcal{A}$. In particular, our assumption that $\mathcal{A}$ holds now implies that we cannot have $|E(H[X_j])|\ge 3s$, in other words, we must have $|E(H[X_j])|< 3s$. 

Using the well-known bound $\alpha(F)\ge \frac{|V(F)|^2}{2|E(F)|+|V(F)|}$ which holds for every graph $F$, we now obtain that
$$\alpha(H[X_j])\ge  \frac{|X_j|^2}{6s+|X_j|}\ge \frac{|X_j|^2}{7s},$$
as desired. Since $X_{-1}, \ldots,X_{\lceil\log_2(i)\rceil-1}$ partition $X$, there exists some $j$ such that $|X_j|\ge \frac{|X|}{\lceil \log_2(i)\rceil+1}\ge \frac{s}{4(\lceil\log_2(i)\rceil+1)}$. We now obtain
$$\alpha(H)\ge \alpha(H[X_j])\ge \frac{|X_j|^2}{7s}\ge \frac{s}{112(\lceil\log_2(i)\rceil+1)^2}\ge \frac{|V(H)|}{112(\lceil\log_2(k)\rceil+1)^2}.$$
 All in all, this shows that $\mathcal{A}$ implies $$\rho(G)\le 112(\lceil\log_2(k)\rceil+1)^2\le 300 (\log \log n)^2$$ for $n$ large enough. As explained before, this concludes the proof of the lemma.
\end{proof}

Using Lemma~\ref{lem:smallish}, we are now ready to prove our first main result, Theorem~\ref{thm:main}. 
\begin{proof}[Proof of Theorem~\ref{thm:main}]
Set $C:=300$. Let $k=k(n):=\frac{1}{2}\log_3(n)$ and $B_1,\ldots,B_k$ with $|B_i|=\frac{n}{3^i}$ be defined as in Lemma~\ref{lem:smallish}. Note that for every sufficiently large $n$, we then have $|B_1|\ge\cdots\ge|B_k|=\sqrt{n}\gg k$. Hence, the preconditions of Lemma~\ref{lem:fracchrom} are met, which implies that w.h.p. as $n$ (and hence $k=k(n)$) tends to infinity, we have $\chi_f(G_{\mathcal{B}})\ge \frac{k}{10\log k}=\frac{\log n}{20\log(3)\log(\log_3(n)/2)}\ge \frac{\log n}{20\log(3)\log \log n}\ge \frac{\log n}{C\log \log n}$. On the other hand, Lemma~\ref{lem:smallish} implies that $\rho(G_{\mathcal{B}})\le 300(\log \log n)^2=C(\log \log n)^2$ with probability at least $1/3$ for all sufficiently large $n$. Altogether, for $n$ sufficiently large we find that $\chi_f(G_{\mathcal{B}})\ge \frac{\log n}{C\log\log n}$ and $\rho(G_{\mathcal{B}})\le C(\log \log n)^2$ hold simultaneously with positive probability. 

Note further that for every $n$ we have $|V(G_{\mathcal{B}})|<n\sum_{i=1}^{\infty}3^{-i}=\frac{n}{2}$. By the probabilistic method, it follows that for every sufficiently large $n$ there exists a graph $G$ satisfying $\chi_f(G)\ge \frac{\log n}{C\log\log n}$ and $\rho(G)\le C(\log \log n)^2$ with at most $n$ vertices. By adding isolated vertices as necessary, we can w.l.o.g. assume that $G$ has in fact exactly $n$ vertices, since the addition of isolated vertices to a graph leaves both the fractional chromatic number and the Hall ratio unchanged. This establishes the existence of graphs with the desired properties, concluding the proof of Theorem~\ref{thm:main}.
\end{proof}

\section{Proof of Theorem~\ref{thm:main2}}\label{sec:main2}

In this section, we will prove Theorem~\ref{thm:main2}. To do so, we establish Lemma~\ref{lem:small} below, which states that a special case of Construction~\ref{con:randomgraph} in which the part sizes decrease extremely rapidly with $i$ (much faster than the exponential drop used in the construction for Theorem~\ref{thm:main}) guarantees not only a constant bound on the Hall ratio, but also that for every weighting of the vertices of the graph by the degrees of a subgraph, one can find an independent set taking a constant fraction of the total weight. A similar choice of part sizes was previously shown to be useful in the context of avoiding regular subgraphs~\cite{janzer}. As before, in the following we omit floors and ceilings from asymptotic expressions when they are not crucial. 

\begin{lemma}\label{lem:small}
Let $n\in \mathbb{N}$,  $k=k(n):=\frac{1}{3}\log_4(\log n)$ and let $B_1,\ldots,B_k$ be pairwise disjoint sets with $|B_i|=n^{1-4^i\varepsilon}$, where $\varepsilon=\varepsilon(n):=\frac{1}{\sqrt{\log n}}$. Then for every fixed constant $\delta>0$, with probability tending to one as $n\rightarrow \infty$, it holds that 
\begin{enumerate}
    \item[(a)] $\rho(G_{\mathcal{B}})\le 4+\delta$, and
    \item[(b)] for every subgraph $H\subseteq G_{\mathcal{B}}$ there exists an independent set $I$ in $G_{\mathcal{B}}$ that touches at least $\frac{|E(H)|}{4+\delta}$ edges of $H$. 
\end{enumerate}
\end{lemma}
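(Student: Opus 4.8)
The plan is to follow the architecture of the proof of Lemma~\ref{lem:smallish}: first extract a ``quasi-deterministic'' sparsity event $\mathcal A$ holding with high probability, and then show that $\mathcal A$ alone forces both (a) and (b). The new feature is that the part sizes now collapse super-exponentially, since consecutive sizes satisfy $|B_i|/|B_{i+1}|=n^{3\cdot 4^i\varepsilon}=e^{3\cdot 4^i\sqrt{\log n}}$; this lets one work with a window scheme of bounded depth, so that the loss in the final union-over-windows step is only a constant factor rather than the $\Theta((\log\log n)^2)$ factor of Lemma~\ref{lem:smallish}.

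For $\mathcal A$ I would take the statement: for every index $\ell$ and every $s$ in a range calibrated so that $|B_{\ell+1}|\lesssim s\lesssim|B_\ell|$, there is no subgraph $H'$ of $G_{\mathcal B}$ with $V(H')\subseteq B_1\cup\dots\cup B_{\ell-1}$, $|V(H')|\le s$ and $|E(H')|\ge c\,s$, for an appropriate fixed constant $c>1$. As in Claim~\ref{cl1}, this is proved by a union bound over the choice of $s$, of the $\le s$ vertices of $H'$ in $B_1\cup\dots\cup B_{\ell-1}$, and of the $\ge c\,s$ edges among them: every potential such edge has probability at most $1/|B_{\ell-2}|$, and $|B_{\ell-2}|$ exceeds $|B_\ell|\gtrsim s$ by a factor $e^{\Theta(4^\ell\sqrt{\log n})}$, so the factor $(1/|B_{\ell-2}|)^{c s}$ beats the $\binom{|B_1\cup\dots\cup B_{\ell-1}|}{s}\le e^{O(s\log n)}$ choices of vertex set with exponential room to spare once $c$ exceeds an explicit threshold that the fast decay makes only slightly larger than $1$; the finitely many small values of $s$ (ruling out $K_4$'s and other tiny dense configurations) are treated by a direct computation, and summing over $\ell$ and $s$ gives $\mathbb P[\overline{\mathcal A}]=o(1)$.

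Granting $\mathcal A$, I would prove (a) as follows. Given $H\subseteq G_{\mathcal B}$ with $s:=|V(H)|$, pick $\ell$ with $|B_{\ell+1}|\lesssim s\lesssim|B_\ell|$; by the super-exponential gap, only a $o(1)$-fraction of $V(H)$ lies outside $B_1\cup\dots\cup B_\ell$, so $X:=V(H)\cap(B_1\cup\dots\cup B_\ell)$ satisfies $|X|\ge(1-o(1))s$. Split $X$ into $Y:=X\cap B_\ell$, which is independent, and $Z:=X\cap(B_1\cup\dots\cup B_{\ell-1})$, for which $\mathcal A$ gives $|E(H[Z])|<c\,s$ and hence, by the standard inequality $\alpha(F)\ge\frac{|V(F)|^2}{2|E(F)|+|V(F)|}$, that $\alpha(H[Z])\ge\frac{|Z|^2}{2cs+|Z|}$. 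Then $\alpha(H)\ge\max\!\big(|Y|,\tfrac{|Z|^2}{2cs+|Z|}\big)$, and since $|Y|+|Z|\ge(1-o(1))s$, writing $|Z|=\theta s$ and optimizing over $\theta\in[0,1]$ yields $\alpha(H)=\Omega(s)$, giving a constant bound on $\rho(G_{\mathcal B})$; the cases where $s$ is comparable to $|V(G_{\mathcal B})|$ (where $\mathbb E[|E(H)|]=o(s)$ and a Chernoff bound gives $|E(H)|<cs$ directly) and where $H$ meets only a bounded number of parts (where $\alpha(H)\ge|V(H)|/4$ trivially) are handled separately. To push the constant all the way down to $4+\delta$ one needs a somewhat finer scheme — e.g.\ peeling off $B_{\ell+1}$ as its own block, and/or using a couple of windows rather than one so that $c$ can be taken closer to $1$ — together with a careful optimization of the thresholds and of the split; this bookkeeping is routine but is where the work lies.

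For (b), I would localize the edges rather than the vertices: charging every edge of $H$ to one of the windows above, some window $B_1\cup\dots\cup B_{\ell-1}$ with its associated $s\approx|V(H)|$ carries a constant fraction of $|E(H)|$, and the subgraph $H':=H[V(H)\cap(B_1\cup\dots\cup B_{\ell-1})]$ is sparse by $\mathcal A$. It then remains to note that a sparse graph has an independent set incident to a constant fraction of its edges: if $I'$ is a maximum independent set of $H'$, then by maximality every vertex outside $I'$ has a neighbour in $I'$, so at least $|V(H')|-\alpha(H')$ edges of $H'$ are incident to $I'$, and since $H'$ is sparse and $\alpha(H')\ge\Omega(|V(H')|)$ by the argument for (a), this is $\Omega(|E(H')|)=\Omega(|E(H)|)$. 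As $I'$ is independent in $G_{\mathcal B}$ and $E(H')\subseteq E(H)$, it is the set we need. The main obstacle is the joint calibration of constants: the union bound forbids $c=1$, while both the quadratic bound for $\alpha$ in (a) and the edge count in (b) deteriorate as $c$ grows, so the window thresholds, the ranges of $s$, and the optimizations must be tuned together to reach the clean value $4+\delta$ in both parts simultaneously.
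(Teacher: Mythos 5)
Your high-level architecture matches the paper's: establish w.h.p.\ a sparsity property of prefixes $B_1\cup\dots\cup B_{i-1}$ (the paper's Claim~\ref{cl2}), then deduce both (a) and (b) deterministically from it. The union-bound computation you sketch for the sparsity event is essentially the paper's (the paper does not even need the windowing scheme you propose, because the super-exponential decay lets one take the whole prefix at once). But the core deduction step is where you diverge from the paper, and this is where the gap lies.

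For (a), you go from ``$G[Z]$ has few edges'' to an independence bound via the quadratic inequality $\alpha(F)\ge\frac{|V(F)|^2}{2|E(F)|+|V(F)|}$, and then optimize over the split between the independent layer $Y=X\cap B_\ell$ and the sparse part $Z$. This cannot reach $4+\delta$: even with the threshold $c$ pushed to $1$ and the split optimized, the quadratic bound on its own gives a Hall ratio bounded by roughly $4.56$, and with the paper's threshold $c=3/2$ (the smallest the union bound comfortably allows, and the one you would realistically use) one gets about $5.5$. The paper's key observation, absent from your proposal, is that the sparsity event controls \emph{all subgraphs} of $G[X]$ (it forbids $|E|\ge\frac32|V|$ for every subgraph, not just for one prescribed vertex count), which makes $G[X]$ $2$-degenerate, hence \emph{$3$-chromatic}; adding $B_i$ as a fourth color gives a proper $4$-coloring of $X'$, and the largest color class immediately yields $\alpha(H)\ge|X'|/4\ge|V(H)|/(4+\delta)$. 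That $\chi\le 3$ is strictly stronger than what the quadratic independence bound gives, and it is the ingredient — not bookkeeping — that produces the constant $4$. Relatedly, your $\mathcal A$ as stated forbids $|E(H')|\ge cs$ for a per-window parameter $s$ rather than $|E(H')|\ge c|V(H')|$, which is not the form needed to extract degeneracy.

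For (b), your maximal-independent-set argument is genuinely different and I don't think it closes. Two issues. First, $I'$ is chosen inside $H'=H[Z]$, so the count $\sum_{v\in I'}\deg_{H'}(v)$ ignores all edges between $Y$ and $Z$ (and these can be a constant fraction of $|E(H)|$). Second, the bound ``at least $|V(H')|-\alpha(H')$ edges are touched'' does not, by itself, give a fixed fraction of $|E(H')|$: it degenerates precisely when $\alpha(H')$ is large, and you have no upper bound on $\alpha(H')$. The paper instead uses the \emph{same} $4$-coloring of $X'$ as in (a): every edge of $E(H)$ that is not internal to the small tail $\bigcup_{\ell>i}B_\ell$ has an endpoint in $X'$, so assigning each such edge to one of the four color classes via an $X'$-endpoint shows that some color class $J$ touches at least $\tfrac14|E(H)\setminus F|\ge\frac{|E(H)|}{4+\delta}$ edges, where $F$ is the small tail-edge set controlled by the second sparsity statement. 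The coloring simultaneously solves (a) and (b), which is exactly the ``joint calibration'' you flag as a difficulty; your two separate mechanisms (quadratic bound for (a), maximal independent set in $H'$ for (b)) do not obviously synchronize.

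In short: your outline is structurally right and the probabilistic part is sound, but the crucial insight — that the sparsity event yields $2$-degeneracy and hence a $4$-coloring of $X'$, which then handles both the vertex and the edge version in one shot — is missing. Replacing the quadratic bound and the maximal-independent-set argument with that coloring argument is not a tuning of constants; it is the content of the proof.
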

\begin{proof}
In the rest of the proof, let us denote $G:=G_{\mathcal{B}}$. The statements claimed in the lemma will be easy to deduce once we have established that $G$ w.h.p. satisfies two sparsity properties that are isolated in the following claim. As in the proof of Lemma~\ref{lem:smallish}, for convenience let us set $B_r:=\emptyset$ for all $r\in\mathbb{Z}\setminus [k]$. 
\begin{claim}\label{cl2}
W.h.p. as $n\rightarrow \infty$ the following statements hold:
\begin{enumerate}
    \item For every $i\in [k]$ there exists no non-empty subgraph $H$ of $G$ with $|V(H)|\le k^5|B_i|$, $V(H)\subseteq \bigcup_{\ell=1}^{i-1}B_\ell$, and $|E(H)|\ge \frac{3}{2}|V(H)|$. 
    \item For every $i\in [k]$ we have
    $\left|E\left(G\left[\bigcup_{\ell=i+1}^{k}B_\ell\right]\right)\right|\le k^4|B_{i+1}|$.
\end{enumerate}
\end{claim}
\begin{proof}[Proof of Claim~\ref{cl2}]
\noindent
\begin{enumerate}
    \item For $i\in [k]$ and $0<s\le k^5|B_i|$, let us denote by $\mathcal{E}(i,s)$ the event that there exists a subgraph $H$ of $G$ with $V(H)\subseteq \bigcup_{\ell=1}^{i-1}B_\ell$, $|V(H)|=s$ and $|E(H)|\ge \frac{3}{2}s$. As a first step, we will give an upper bound on $\mathbb{P}[\mathcal{E}(i,s)]$. Clearly $\mathcal{E}(i,s)$ cannot be satisfied when $i=1$, so let us assume $i\ge 2$ in the following. Note that given a subgraph $H$ certifying that $\mathcal{E}(i,s)$ holds, we can always delete edges from $H$ to make it have exactly $\lceil \frac{3}{2}s\rceil$ edges.
    Hence, to bound the probability of the event $\mathcal{E}(i,s)$ we may w.l.o.g. take a union bound only over those $H$ with $|V(H)|=s$ and $|E(H)|=\lceil \frac{3}{2}s\rceil$. Note that $$|V(G)|=\sum_{i=1}^{k}{|B_i|}=n\sum_{i=1}^{k}{n^{-4^i\varepsilon}}<n$$ for every sufficiently large $n$. 
    We therefore obtain the following upper bound on the probability of $\mathcal{E}(i,s)$ for every $2\le i\le k$ and $0<s\le k^5|B_i|$:

    \begin{align*}
        \mathbb{P}[\mathcal{E}(i,s)]&\le \binom{|V(G)|}{s}\binom{\binom{s}{2}}{\lceil\frac{3}{2}s\rceil}\left(\frac{1}{|B_{i-1}|}\right)^{\lceil\frac{3}{2}s\rceil}\\
        &\le \left(\frac{en}{s}\right)^s \left(\frac{s^2/2}{\lceil\frac{3}{2}s\rceil}\right)^{\lceil\frac{3}{2}s\rceil}\left(\frac{1}{|B_{i-1}|}\right)^{\lceil\frac{3}{2}s\rceil} \\
        &\le \left(\frac{en}{s}\right)^s \left(\frac{s}{3|B_{i-1}|}\right)^{\lceil\frac{3}{2}s\rceil}\\
        &\le \left(\frac{ens^{1/2}}{(3|B_{i-1}|)^{3/2}}\right)^s\\
        &\le \left(\frac{n(k^5|B_i|)^{1/2}}{|B_{i-1}|^{3/2}}\right)^s\\
        &= \left(k^{5/2}\frac{n\cdot n^{1/2(1-4^i\varepsilon)}}{n^{3/2(1-4^{i-1}\varepsilon)}}\right)^s\\
        &=\left(k^{5/2}n^{-\frac{1}{2}4^{i-1}\varepsilon}\right)^s.
    \end{align*}  
    Since $i\ge 2$ and $\varepsilon=\frac{1}{\sqrt{\log n}}$, we can further bound the last expression to obtain $$\mathbb{P}[\mathcal{E}(i,s)]\le (k^{5/2}n^{-2\varepsilon})^s=(k^{5/2}e^{-2\sqrt{\log n}})^s.$$ Taking a union bound over all possible choices of $i$ and $s$, we now obtain that the probability that statement $(1)$ of the claim does not hold is upper-bounded by 
    \begin{align*}&\sum_{i=1}^{k}\sum_{s=1}^{k^5|B_1|} (k^{5/2}e^{-2\sqrt{\log n}})^s\le k\cdot \sum_{s=1}^{\infty}(k^{5/2}e^{-2\sqrt{\log n}})^s \\
    &=\frac{k^{7/2}e^{-2\sqrt{\log n}}}{1-k^{5/2}e^{-2\sqrt{\log n}}} \rightarrow 0\end{align*} for $n\rightarrow \infty$, where we used that $k=k(n)=O(\log \log n)$ and thus  $k^{7/2}$ (as well as $k^{5/2}$) are asymptotically dominated by 
 $e^{2\sqrt{\log n}}$.
    
    \item The statement in (2) clearly holds for $i=k$, so we may consider only $i\in [k-1]$ in the following. By definition of $G=G_{\mathcal{B}}$, for every pair of indices $\ell<\ell'$ in $[k]$ we have $\mathbb{E}\left[|E(G[B_\ell\cup B_{\ell'}])|\right]=|B_\ell||B_{\ell'}|\cdot \frac{1}{|B_{\ell'}|}=|B_{\ell}|$. From this it directly follows that for every $i\in [k-1]$ we have $$\mathbb{E}\left[\left|E\left(G\left[\bigcup_{\ell=i+1}^{k}B_\ell\right]\right)\right|\right]=\sum_{\ell=i+1}^{k}(k-\ell)|B_\ell|\le k^2|B_{i+1}|.$$
    Using Markov's inequality, this implies that
    $$\mathbb{P}\left[\left|E\left(G\left[\bigcup_{\ell=i+1}^{k}B_\ell\right]\right)\right|> k^4|B_{i+1}|\right]<\frac{k^2|B_{i+1}|}{k^4|B_{i+1}|}=\frac{1}{k^2}.$$
    Using a union bound over all choices of $i$, we obtain that
    \begin{align*}
       &\mathbb{P}\left[\exists i \in [k]: \left|E\left(G\left[\bigcup_{\ell=i+1}^{k}B_\ell\right]\right)\right|> k^4|B_{i+1}|\right]\le k\frac{1}{k^2}=\frac{1}{k(n)}\rightarrow 0 
    \end{align*}
    for $n\rightarrow \infty$. Thus, the statement $(2)$ of the claim indeed holds w.h.p.
\end{enumerate}
\end{proof}
To conclude the proof of the lemma, we will now show that for every large enough $n$, if the statements $(1)$ and $(2)$ of Claim~\ref{cl2} hold, then we also must have that $\rho(G)\le 4+\delta$ (i.e. $\alpha(H)\ge \frac{|V(H)|}{4+\delta}$ for every $H\subseteq G$) and that for every subgraph $H$ of $G$ there exists an independent set in $G$ touching at least $\frac{|E(H)|}{4+\delta}$ edges of $H$. Note that w.l.o.g. it suffices to prove these two statements for all subgraphs $H$ of $G$ without isolated vertices\footnote{This is because we can always add all the isolated vertices of $H$ to a suitable independent set in the subgraph of $H$ induced by the vertices of degree at least one.}. Once this is achieved, the statements of the lemma will be directly implied by Claim~\ref{cl2}.

So suppose in the remainder of the argument that statements $(1)$ and $(2)$ of Claim~\ref{cl2} hold, and consider any non-empty subgraph $H$ of $G$ without isolated vertices. We have to show that $\alpha(H)\ge \frac{|V(H)|}{4+\delta}$ and that $G$ contains an independent set touching at least $\frac{|E(H)|}{4+\delta}$ edges of $H$. To do so, we start by observing
$$|V(H)|\le |V(G)|=\sum_{i=1}^{k}|B_i|\le k|B_1|<k^{5}|B_1|.$$ 
Hence, there exists a (unique) index $i\in [k]$ such that $k^5|B_{i+1}|<|V(H)|\le k^5|B_i|$. Let $X:=V(H)\setminus\bigcup_{\ell=i}^{k} B_\ell$. Then $X\subseteq \bigcup_{\ell=1}^{i-1}B_\ell$, and since $G[X]$ and all of its subgraphs have at most $k^5|B_i|$ vertices, it follows from item (1) of Claim~\ref{cl2} that every subgraph of $G[X]$ has average degree strictly less than $3$. In particular, every subgraph of $G[X]$ contains a vertex of degree at most $2$, which means that $G[X]$ is a $2$-degenerate graph. This implies that $\chi(G[X])\le 3$. Let $X':=V(H)\setminus \bigcup_{\ell=i+1}^{k} B_\ell$. Since $B_i$ is an independent set in $G$ and $X'\subseteq X \cup B_i$, we conclude $\chi(G[X'])\le 4$. Consider a fixed proper $4$-coloring of $G[X']$. Let us denote by $I$ a color class of this coloring of maximum size and by $J$ a color class touching the most edges of $H$. Then clearly $|I|\ge \frac{|X'|}{4}$. Further, denoting $F:=E\left(G\left[\bigcup_{\ell=i+1}^{k}B_\ell\right]\right)$, the definition of $X'$ implies that every edge in $E(H)\setminus F$ has at least one endpoint in $X'$. Hence, $J$ must touch at least $\frac{1}{4}|E(H)\setminus F|$ edges of $H$. To conclude, it remains to  compare the quantities $|X'|$ and $|V(H)|$ as well as $|E(H)\setminus F|$ and $|E(H)|$ to each other: 

By definition of $X'$ we have
$$|X'|\ge |V(H)|-\sum_{\ell=i+1}^{k}|B_\ell|\ge |V(H)|-k|B_{i+1}|,$$
and since $k^5|B_{i+1}|<|V(H)|$ (by choice of $i$) and $k=k(n)\rightarrow \infty$ for $n\rightarrow \infty$, this implies that $|X'|\ge \frac{|V(H)|}{1+\delta/4}$ for $n$ sufficiently large. Similarly, by item $(2)$ of Claim~\ref{cl2} we have
$$|F|=\left|E\left(G\left[\bigcup_{\ell=i+1}^{k}B_\ell\right]\right)\right|\le k^{4}|B_{i+1}|<\frac{1}{k}|V(H)|\le \frac{2}{k}|E(H)|,$$
where we used that $H$ has no isolated vertices by assumption. Hence, for $n$ sufficiently large we also must have $|E(H)\setminus F|\ge \frac{|E(H)|}{1+\delta/4}$. All in all, this implies that
$\alpha(H)\ge |I|\ge \frac{|X'|}{4}\ge \frac{|V(H)|}{4+\delta}$ and that $J$ is an independent set in $G$ touching at least $\frac{|E(H)\setminus F|}{4}\ge \frac{|E(H)|}{4+\delta}$ edges of $H$. This is what had to be shown. As explained previously, this concludes the proof of the lemma.
\end{proof}

Using Lemmas~\ref{lem:fracchrom} and~\ref{lem:small}, we now easily obtain the statement of Theorem~\ref{thm:main2}.

\begin{proof}[Proof of Theorem~\ref{thm:main2}]
Fix $\delta>0$ arbitrarily and let $G=G(\mathcal{B})$ be the random graph as in Lemma~\ref{lem:small}. Then by the latter w.h.p. we have that $\rho(G)\le 4+\delta$ and that for every subgraph $H$ of $G$ there exists an independent set in  $G$ touching at least $\frac{|E(H)|}{4+\delta}$ edges of $H$. Since the number of edges of $H$ touched by an independent set $I$ in $G$ equals the sum $\sum_{v\in I}\mathrm{deg}_H(v)$, this is equivalent to saying $\alpha_{\text{deg}_H}(G)\ge \frac{|E(H)|}{4+\delta}$ for every $H\subseteq G$. 

On the other hand, we have 
$$|B_1|\ge \cdots \ge |B_k|=n^{1-4^k\varepsilon}=n^{1-(\log n)^{-1/6}}>k=\frac{1}{3}\log_4(\log n)$$ for every sufficiently large $n$. Thus, the preconditions of Lemma~\ref{lem:fracchrom} are met, and we obtain that with high probability as $n\rightarrow\infty$ (and hence $k\rightarrow \infty$) we have $$\chi_f(G)\ge \frac{k}{10\log k}\ge \frac{\log \log n}{30\log(4) \log \log \log n}\ge \frac{\log \log n}{50\log \log\log n}.$$

Summarizing, w.h.p. as $n\rightarrow \infty$ the graph $G$ has at most
$$\sum_{i=1}^{k}n^{1-4^i\varepsilon}<n$$ vertices and satisfies $\rho(G)\le 4+\delta$, $\alpha_{\text{deg}_H}(G)\ge \frac{|E(H)|}{4+\delta}$ for every $H\subseteq G$ as well as $\chi_f(G)\ge \frac{\log \log n}{50\log \log \log n}$. Since the latter three properties are maintained by adding to $G$ a set of $n-|V(G)|$ isolated vertices, this implies the assertion of Theorem~\ref{thm:main2}. 
\end{proof}

\paragraph*{\textbf{Acknowledgements.}} The author would like to thank Barnab\'{a}s Janzer, James Davies, Meike Hatzel and Liana Yepremyan for discussions on the topic.
\bibliographystyle{abbrvurl}
\bibliography{references}

\end{document}